\newtheorem{theorem}{Theorem}[section]
\newtheorem{lemma}[theorem]{Lemma}
\newtheorem{conjecture}[theorem]{Conjecture}
\newtheorem{proposition}[theorem]{Proposition}
\newtheorem{problem}[theorem]{Problem}
\newtheorem{claim}[theorem]{Claim}
\numberwithin{equation}{section}
\title{Strong edge-coloring of graphs with maximum edge weight seven}
\author{Runze Wang}
\address[]{Department of Mathematical Sciences, University of Memphis, Memphis, TN 38152, USA}
\email{rwang6@memphis.edu; runze.w@hotmail.com}
\thanks{}
\date{\today}
\subjclass[2020]{05C15}
\begin{document}

\sloppy

\begin{abstract}
    A strong edge-coloring of a graph $G$ is an edge-coloring such that any two edges of distance at most two receive distinct colors. The minimum number of colors we need in order to give $G$ a strong edge-coloring is called the strong chromatic index of $G$, denoted by $\chi_s'(G)$. The maximum edge weight of $G$ is defined to be $\max\{d(u)+d(v):\ uv\in E(G)\}$. In this paper, using the discharging method, we prove that if $G$ is a graph with maximum edge weight $7$ and maximum average degree less than $\frac{40}{13}$, then $\chi_s'(G)\le 13$. Also, we determine the largest possible maximum average degree of a graph with given maximum edge weight.
\end{abstract}
\keywords{Strong edge-coloring; Strong chromatic index; Edge weight; Maximum average degree}

\maketitle

\section{Introduction}
In this paper, every graph is assumed to be a finite simple connected graph. 

For a graph $G=(V,\ E)$, a \emph{proper edge-coloring} of $G$ is an assignment of colors to the edges such that adjacent edges receive distinct colors. On the basis of the proper edge-coloring, Fouquet and Jolivet \cite{FJ} introduced the \emph{strong edge-coloring} of $G$, where two adjacent edges receive distinct colors, and two edges joint by another edge also receive distinct colors. The distance between two edges $e_1$ and $e_2$ in $G$, denoted by $D_G (e_1,\ e_2)$, is defined to be the distance between the corresponding vertices in the line graph of $G$. So equivalently, we can say that, in a strong edge-coloring, any two edges of distance at most two receive distinct colors. 

If, by $f:\ E(G)\longrightarrow \{1,\ 2,\ ...,\ c\}$, we can use $c$ colors to give a strong edge-coloring to $G$, then $f$ is called a \emph{strong $c$-edge-coloring} of $G$. The minimum number of colors we need in a strong edge-coloring of $G$ is called the \emph{strong chromatic index} of $G$, denoted by $\chi_s'(G)$. 

Let $\Delta$ be the maximum vertex degree of $G$. The following conjecture was made by Erd\H os and Ne\v set\v ril \cite{EN}.

\begin{conjecture}[Erd\H os and Ne\v set\v ril \cite{EN}]\label{conj1} 
\[
    \chi_s'(G)\le\begin{cases}
        \frac{5}{4}\Delta^2 &if\ \Delta\ is\ even, \\
        \frac{5}{4}\Delta^2-\frac{\Delta}{2}+\frac{1}{4} &if\ \Delta\ is\ odd.
    \end{cases}
\]
\end{conjecture}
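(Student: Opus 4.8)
The statement above is the celebrated Erd\H os--Ne\v set\v ril conjecture, which, despite nearly four decades of effort, remains \emph{open} in full generality; what follows is therefore a description of the most promising line of attack together with an honest account of why it falls short, rather than a complete argument. The natural starting point is the greedy bound: every edge $uv$ lies within distance two of at most $2\Delta^2-2\Delta$ other edges, namely those incident to $u$, to $v$, or to a neighbour of $u$ or of $v$. Phrasing this in terms of the square of the line graph, $\chi_s'(G)=\chi\bigl(L(G)^2\bigr)$ and $\Delta\bigl(L(G)^2\bigr)\le 2\Delta^2-2\Delta$, so a greedy colouring immediately gives $\chi_s'(G)\le 2\Delta^2-2\Delta+1$. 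The conjecture predicts that the leading constant $2$ can be reduced to $\tfrac54$, and that this is best possible: the balanced blow-up of $C_5$, obtained by replacing each vertex of the $5$-cycle by an independent set of size $\Delta/2$ and each edge by a complete bipartite graph, has maximum degree $\Delta$; a short case check shows that every pair of its $5(\Delta/2)^2=\tfrac54\Delta^2$ edges lies within distance two, so $L(G)^2$ is a complete graph and the graph genuinely requires $\tfrac54\Delta^2$ colours.

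The plan is to attack the upper bound probabilistically, following the strategy behind every known improvement over the greedy bound. First I would colour the edges with a palette of size $c\Delta^2$ uniformly and independently at random and call an edge \emph{bad} if it collides in colour with some edge at distance at most two. The expected number of colours forbidden at a fixed edge is governed by how tightly its distance-two neighbourhood clusters in $L(G)^2$: whenever that neighbourhood fails to be a near-clique, many of its pairs of edges are mutually at distance more than two and may safely share a colour, so the effective number of constraints drops well below $2\Delta^2$. The Lov\'asz Local Lemma --- in the Molloy--Reed iterative ``nibble'' formulation, or in the sharper entropy-compression and local-occupancy refinements of Bruhn--Joos, Bonamy--Perrett--Postle, and Hurley--de Joannis de Verclos--Kang --- then allows one to recolour the bad edges and to conclude that a valid strong colouring exists with a leading constant strictly below $2$. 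Executing this carefully is exactly what currently yields bounds of the shape $\chi_s'(G)\le(2-\varepsilon)\Delta^2$, the best known constant being around $1.77$.

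The hard part --- indeed the entire difficulty of the conjecture --- is closing the gap from these probabilistic constants down to the conjectured $\tfrac54$. The probabilistic machinery exploits only \emph{local sparsity}, gaining precisely when the neighbourhood of an edge in $L(G)^2$ is bounded away from a clique. But the $C_5$-blow-up is locally almost as dense as a clique while still being colourable with exactly $\tfrac54\Delta^2$ colours, and no purely local counting argument can separate it from configurations that truly need more colours. To reach $\tfrac54$ one would therefore need a \emph{stability} theorem: a proof that any graph whose local structure is close to that of the extremal $C_5$-blow-up inherits its favourable global colourability, while any graph bounded away from that structure admits a strictly better random colouring. Establishing such a structural dichotomy, rather than merely optimising the constants in a Local Lemma computation, is the genuine obstacle, and it is the reason the present paper confines itself to the tractable regime of bounded maximum edge weight and maximum average degree instead of attempting the full conjecture.
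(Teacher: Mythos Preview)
The paper does not prove this statement at all: it is stated as Conjecture~\ref{conj1}, the Erd\H os--Ne\v set\v ril conjecture, and is included only as background. The paper's actual contribution is Theorem~\ref{thm}, which concerns graphs with maximum edge weight at most $7$ and $mad(G)<\tfrac{28}{9}$, not the full conjecture.

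Your proposal correctly recognises that the conjecture is open and explicitly declines to claim a proof, instead giving an accurate survey of the extremal construction, the greedy bound, and the probabilistic improvements culminating in the Hurley--de~Joannis~de~Verclos--Kang constant. That is the right call: there is no ``paper's own proof'' to compare against here, and any attempt to present a complete argument would have been wrong. The only minor comment is that your discussion, while informative, is orthogonal to what the paper actually does; the paper never engages with the probabilistic or stability approaches you outline, and its methods (discharging, reducible configurations, Hall's theorem, the Combinatorial Nullstellensatz) are tailored to the restricted setting of Theorem~\ref{thm} rather than to Conjecture~\ref{conj1}.
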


For general upper bounds: It was proved by Bruhn and Joos \cite{BJ} that $\chi_s'(G)\le 1.93\Delta^2$ for large $\Delta$. This result was improved to $\chi_s'(G)\le 1.772\Delta^2$ by Hurley et al. \cite{HDK}.

For small $\Delta$: This conjecture is trivially true when $\Delta\le 2$, and the first non-trivial case $\Delta=3$ was also verified \cite{An,HHT}. If $\Delta=4$, then the conjecture says that $\chi_s'(G)\le 20$, and the best result so far is $\chi_s'(G)\le 21$, given by Huang et al. \cite{HSY}. If $\Delta=5$, then the conjecture says that $\chi_s'(G)\le 29$, and the best result so far is $\chi_s'(G)\le 37$, given by Zang \cite{Za}.

The \emph{maximum average degree} of $G$, denoted by $mad(G)$, is defined by
\begin{align*}
    mad(G)=\max\bigl\{\bar{d}(G'):\ G'\subseteq G\bigr\},
\end{align*}
where $\bar{d}(G'):=\frac{2|E(G')|}{|V(G')|}$ is the average degree of $G'$.

It was proved by Lv et al. \cite{LLY} that if $G$ is a graph with $\Delta=4$ and $mad(G)<\frac{51}{13}$, then the conjecture is true for $G$. This gave an improvement on a similar result by Bensmail et al. \cite{BBH}, where they required $mad(G)<\frac{19}{5}$. It was proved by Lu et al. \cite{LLH} that if $G$ is a graph with $\Delta=5$ and $mad(G)<\frac{22}{5}$, then the conjecture is true for $G$. The proofs in \cite{LLY,BBH,LLH} are all based on the discharging method.

The \emph{maximum edge weight} of $G$, denoted by $W(G)$, is defined by
\begin{align*}
    W(G)=\max\{d(u)+d(v):\ uv\in E(G)\}.
\end{align*}
For graphs with fixed maximum edge weights, Chen et al. \cite{CHYZ} made the following conjecture.

\begin{conjecture}[Chen et al. \cite{CHYZ}]\label{conj2}
    Let $G$ be a graph with maximum edge weight $W(G)\ge 5$. Then
    \[\chi_s'(G)\le \begin{cases}
        5\lceil\frac{W(G)}{4}\rceil^2-8\lceil\frac{W(G)}{4}\rceil+3 &if\ W(G)\equiv 1\ mod\ 4; \\
        5\lceil\frac{W(G)}{4}\rceil^2-6\lceil\frac{W(G)}{4}\rceil+2 &if\ W(G)\equiv 2\ mod\ 4; \\
        5\lceil\frac{W(G)}{4}\rceil^2-4\lceil\frac{W(G)}{4}\rceil+1 &if\ W(G)\equiv 3\ mod\ 4; \\
        5\lceil\frac{W(G)}{4}\rceil^2 &if\ W(G)\equiv 0\ mod\ 4.
    \end{cases}
    \]
\end{conjecture}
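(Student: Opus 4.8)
The plan is to reduce the problem to the chromatic number of the square of the line graph, $\chi_s'(G)=\chi\bigl(L(G)^2\bigr)$, and then to bound this quantity through the local conflict structure of $G$ measured by edge weights rather than by $\Delta$. The first step is to record the extremal examples that force the four formulas in \Cref{conj2}: balanced and nearly balanced blow-ups of $C_5$, in which each vertex of $C_5$ is replaced by an independent set of $k$ or $k-1$ vertices, where $k=\lceil W/4\rceil$. Because every two edges of a $C_5$-blow-up lie at distance at most two, such a graph satisfies $\chi_s'=|E|$; the balanced blow-up with all classes of size $k$ has every vertex of degree $2k$, edge weight exactly $4k=W$, and $|E|=5k^2$, which matches the case $W\equiv 0 \pmod 4$, while the remaining residues come from unbalancing the class sizes. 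These constructions explain the leading term $5\lceil W/4\rceil^2$ and show that \Cref{conj2} specializes to the Erd\H os--Ne\v set\v ril bound \cite{EN} when all degrees equal $W/2$.

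The second step is a greedy upper bound. For an edge $uv$, every edge at distance at most two either shares an endpoint with $uv$ or is incident to a neighbour of $u$ or $v$; since $d(w)\le W-d(u)$ for each neighbour $w$ of $u$ (and symmetrically for $v$), the number of such edges is at most $\bigl(d(u)+d(v)-2\bigr)+\bigl(d(u)-1\bigr)\bigl(W-d(u)-1\bigr)+\bigl(d(v)-1\bigr)\bigl(W-d(v)-1\bigr)$. Maximizing over $d(u)+d(v)\le W$ gives a bound of order $\tfrac12 W^2$, so a greedy colouring yields $\chi_s'(G)\le \tfrac12 W^2+O(W)$. Since the target is $5\lceil W/4\rceil^2\approx\tfrac{5}{16}W^2$, the entire difficulty is the improvement of the leading constant from $\tfrac12$ to $\tfrac{5}{16}$, the same $\tfrac58$ saving that makes \Cref{conj1} hard.

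The third step is to split the proof into a sparse and a dense regime according to a threshold on $mad(G)$. In the sparse regime I would run the discharging method on a minimal counterexample: list reducible configurations (light edges, small-degree vertices adjacent to small-degree vertices, and short lightly weighted paths and cycles) that can be deleted and recoloured, assign to each vertex an initial charge $d(v)-\mu$, and redistribute charge so that every vertex finishes nonnegative, contradicting $\sum_v\bigl(d(v)-\mu\bigr)<0$ whenever $mad(G)<\mu$. This is exactly the engine that the present paper develops for $W=7$; organized by a family of reducibility lemmas indexed by $W$, it should dispatch every graph whose density is bounded away from that of the extremal $C_5$-blow-ups.

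The dense regime is where the main obstacle lies, and it is the reason \Cref{conj2} remains open in general. Bounded edge weight does not bound $mad$, so the extremal $C_5$-blow-ups are dense and lie beyond the reach of discharging; there one must instead use the semi-random ``nibble'' and local-occupancy techniques of Bruhn--Joos \cite{BJ} and Hurley et al.\ \cite{HDK}, adapted to exploit that high-degree vertices are forced to have low-degree neighbours. These methods should deliver a bound $cW^2$ with $c<\tfrac12$, but pushing $c$ down to the sharp value $\tfrac{5}{16}$ together with the exact lower-order corrections is not currently attainable even in the maximum-degree formulation of \Cref{conj1}. I therefore expect the honest outcome of this plan to be a proof of small fixed cases---such as the $W=7$ case established in this paper under the additional hypothesis $mad(G)<\tfrac{28}{9}$---combined with an asymptotic constant strictly between $\tfrac{5}{16}$ and $\tfrac12$ in the dense regime, with the full conjecture left as the central open problem.
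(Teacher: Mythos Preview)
The statement you are addressing is \Cref{conj2}, which the paper records as an \emph{open conjecture} of Chen et al.~\cite{CHYZ}; the paper does not prove it and contains no proof to compare against. The paper's actual contribution is the far weaker \Cref{thm}, which settles only the single value $W(G)=7$ under the extra hypothesis $mad(G)<\tfrac{28}{9}$, via the discharging argument in Section~2.

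Your submission is not a proof but a research programme, and you say so yourself in the last paragraph. Your sparse-regime plan (reducible configurations plus discharging on a minimal counterexample) is exactly what the paper carries out for $W=7$, so that part is consistent with the paper's method. The genuine gap is the dense regime: you correctly observe that the extremal $C_5$-blow-ups have $mad$ close to $W/2$, well beyond any discharging threshold, and you concede that the nibble/local-occupancy machinery of \cite{BJ,HDK} only yields a leading constant strictly between $\tfrac{5}{16}$ and $\tfrac12$, not the sharp $\tfrac{5}{16}$ the conjecture demands. That concession is accurate---\Cref{conj2} is open for every $W\ge 7$---but it means what you have written is an outline of possible attack lines together with an honest admission that they do not close, rather than a proof of the stated conjecture.
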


This conjecture was verified for graphs with $W(G)=5$ by Wu and Lin \cite{WL}. It was also verified for graphs with $W(G)=6$ by Nakprasit and Nakprasit \cite{NN}, as well as independently by Chen et al. \cite{CHYZ}. If $W(G)=7$, then this conjecture says that $\chi_s'(G)\le 13$, and it was proved by Chen et al. \cite{CHYZ} that $\chi_s'(G)\le 15$. Very recently, Nelson and Yu \cite{NY} proved that this conjecture is true if $G$ is a planar graph with $W(G)=7$. If $W(G)=8$, then this conjecture says that $\chi_s'(G)\le 20$, and it was proved by Chen et al. \cite{CCZZ} that $\chi_s'(G)\le 21$.

In this paper, using the discharging method, we prove Conjecture \ref{conj2} for graphs with maximum edge weight seven and restricted maximum average degrees.

\begin{theorem}\label{thm}
    Let $G$ be a graph with $W(G)\le 7$. If $mad(G)<\frac{40}{13}$, then $\chi_s'(G)\le 13$.
\end{theorem}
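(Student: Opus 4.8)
The plan is to argue by contradiction using the discharging method. Suppose the theorem fails and let $G$ be a counterexample with $|E(G)|$ minimum: $G$ is a connected graph with $W(G)\le 7$ and $mad(G)<\frac{28}{9}$ but $\chi_s'(G)\ge 14$. Every proper subgraph $H$ of $G$ again satisfies $W(H)\le 7$, $mad(H)<\frac{28}{9}$, and has fewer edges; since the conclusion passes trivially from connected graphs to disconnected ones (edges in different components are never within distance two), minimality gives $\chi_s'(H)\le 13$ for every such $H$, and also $\chi_s'(G)\le 10$ whenever $\Delta(G)\le 3$ by the known $\Delta=3$ bound $\chi_s'\le 10$, so we may assume $\Delta(G)\in\{4,5\}$. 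I will use the minimality freely: delete a bounded piece of $G$, $13$-colour the remainder, and extend. Write $V_i$ for the set of degree-$i$ vertices.

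First I would pin down the local structure. A pendant edge $uv$ ($d(u)=1$) can be deleted and re-inserted, since the edges within distance two of $uv$ number at most $(d(v)-1)(7-d(v))\le 9<13$; hence $\delta(G)\ge 2$. Combined with $W(G)\le 7$ this forces $\Delta(G)\le 5$ and the sharp rules: a $5$-vertex has only $2$-neighbours, a $4$-vertex only neighbours of degree $\le 3$, a $3$-vertex only neighbours of degree $\le 4$. Two cheap deletions (a single $2$-vertex; two adjacent $2$-vertices) rule out $V_2V_2$ and $V_2V_3$ edges, so every edge of $G$ is of type $V_2V_4$, $V_2V_5$, $V_3V_4$, or $V_3V_3$: $V_2$ and $V_4\cup V_5$ are independent sets, each $2$-vertex has both neighbours in $V_4\cup V_5$, and each $3$-vertex has all neighbours in $V_3\cup V_4$. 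The substantive part is then a list of further forbidden configurations, each obtained by deleting a vertex/edge/small set, $13$-colouring the rest, and extending: when a low-degree vertex is removed there is slack (the relevant count of edges within distance two is $\le 12$) and a greedy extension works; the delicate cases are exactly those touching a degree-sum-$7$ edge, where that count equals $13$ on the nose, so a one-shot greedy extension fails by a single colour. There one extends instead by a simultaneous (Hall/SDR) choice of the new colours, using that $W(G)\le 7$ keeps the nearby edges low-degree and their palettes short, and in the tightest sub-cases by a short Kempe-chain recolouring inside the reduced graph to break a degenerate obstruction. These give, in particular, bounds on how many $2$-neighbours a $4$- or $5$-vertex can have, the impossibility of a $2$-vertex with two $5$-neighbours (more generally, of a $2$-vertex whose two ``bills'' must both be paid by overloaded $4$-vertices), and restrictions on the length of chains and the shape of clusters of $3$-vertices lying far from any $4$-vertex.

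For the discharging, set $\mu(v)=d(v)-\frac{28}{9}$, so $\mu(V_2)=-\frac{10}{9}$, $\mu(V_3)=-\frac19$, $\mu(V_4)=\frac89$, $\mu(V_5)=\frac{17}{9}$ and $\sum_{v\in V(G)}\mu(v)=2|E(G)|-\frac{28}{9}|V(G)|<0$. I would redistribute charge so that the surpluses at $4$- and $5$-vertices flow to the deficient $2$- and $3$-vertices: a $5$-vertex pays each of its (necessarily $2$-valent) neighbours, a $4$-vertex pays each incident $2$- and $3$-vertex, and charge is relayed along $V_3V_3$ edges and along $5$–$2$–$4$ paths so as to reach $3$-vertices buried inside $V_3$. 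Using the structural restrictions above one checks the new charge $\mu^*(v)$ is nonnegative at every vertex; since discharging is conservative, $0\le\sum_v\mu^*(v)=\sum_v\mu(v)<0$, a contradiction, so no counterexample exists.

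The main obstacle is making the two halves fit. On the reducibility side, because $W(G)=7$ is odd the worst edges have degree sum exactly $7$ and carry exactly $13$ edges within distance two, so every tight configuration has to be squeezed out by the Hall/Kempe refinement rather than by counting. On the discharging side, the constant $\frac{28}{9}$ is forced: $\frac{28}{9}<\frac{24}{7}$, and $\frac{24}{7}$ is the average degree of a $(3,4)$-biregular bipartite graph, so $mad(G)<\frac{28}{9}$ is exactly what forbids locally $(3,4)$-biregular pieces; the list of reducible configurations and the discharging rules must be calibrated against this threshold so that $2$-vertices flanked by $5$-vertices and $3$-vertices deep inside $V_3$ are both provably paid in full. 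Balancing the reducible configurations (each must be genuinely reducible, and together they must make the charge accounting close) is the crux of the argument.
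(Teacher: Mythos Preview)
Your outline matches the paper's strategy---minimal counterexample, reducible configurations, discharging with $\mu(v)=d(v)-\tfrac{28}{9}$---but the load-bearing steps are only promised, and one of them does not go the way you suggest. The case of a $2$-vertex $v$ with two $5$-neighbours is genuinely tight: each edge at $v$ sees exactly $13$ edges within distance two, and deleting the edges incident to a $5$-vertex leaves five pairwise adjacent uncoloured edges whose lists have size only about $4$, so neither greedy nor a bare Hall argument produces an SDR; your appeal to Kempe chains is not substantiated. The paper does something qualitatively different here: once $\delta\ge2$ and every other degree pair at a $2$-vertex has been excluded by the easy $\le12$ count, a surviving $2$-vertex with two $5$-neighbours forces the entire component to be $(2,5)$-bipartite, and Nakprasit's Lemma~\ref{bipartite} gives $\chi_s'\le10$ globally. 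This is a global, not a local, reduction, and with it $5$-vertices disappear altogether---you never discharge from them, contrary to your plan.

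After that, only degrees $2,3,4$ remain, and the discharging is much finer than ``relay along $V_3V_3$ edges''. The paper stratifies $3$-vertices by their number of $4$-neighbours into types $3(A),3(B),3(C),3(D)$, and further splits $3(C)$ by the number of $3(D)$-neighbours into $C_0,C_1,C_2$. The two nontrivial reducible configurations are a $3(D)$-vertex with two $3(C_2)$-neighbours (Claim~\ref{3v3c}) and a $4$-vertex adjacent simultaneously to a $2$-vertex and to a $3(C_{1+})$-vertex (Claim~\ref{4v3c}); each deletes four edges and finishes either via Hall or by colouring two edges identically after verifying (through auxiliary no-triangle and no-short-$3$-cycle/path lemmas) that those two edges are at distance $\ge3$. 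The discharging then uses five distinct transfer amounts $\tfrac{5}{9},\tfrac{7}{45},\tfrac{1}{9},\tfrac{2}{45},\tfrac{1}{45}$, calibrated so that $2$-vertices, all three $3(C_i)$ types, $3(D)$-vertices, and $4$-vertices carrying a $2$-neighbour all finish at exactly $0$. You are right that balancing the configurations against the rules is the crux; your proposal does not yet carry out either half.
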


Note that, in this theorem, we only require "$W(G)\le 7$". The conclusion for $G$ with "$W(G)=7$" naturally follows.

In the proof, we need to use a conclusion for bipartite graphs.

\begin{lemma}[Nakprasit \cite{Na}]\label{bipartite}
    Let $G$ be a bipartite graph with partition $(A,\ B)$. If every vertex in $A$ has degree at most $2$, and every vertex in $B$ has degree at most $d$, then $\chi_s'(G)\le 2d$.
\end{lemma}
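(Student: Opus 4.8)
\emph{Proof proposal.} The plan is to translate the strong edge-coloring of $G$ into an \emph{incidence coloring} of an auxiliary multigraph built on the part $B$, and then to color it with a two-coordinate palette. The first step is to record exactly which pairs of edges conflict. Since $G$ is bipartite every edge has the form $ab$ with $a\in A$ and $b\in B$, and no two vertices on the same side are adjacent. Writing out the four ways two edges $ab$ and $a'b'$ can lie at distance at most two, the same-side adjacencies are impossible and the condition collapses to: $ab$ and $a'b'$ are at distance at most two if and only if $a'\in N(b)$ or $b'\in N(a)$. I would state this as a preliminary claim, since it is the only place the bipartite hypothesis is used qualitatively.

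Next I would build the auxiliary multigraph $H$ on vertex set $B$. Because every $a\in A$ has degree at most $2$, I attach to each degree-$2$ vertex $a$ with neighbours $b,b''$ the single $H$-edge $bb''$, and to each degree-$1$ vertex a pendant edge at its neighbour (introducing a phantom endpoint). Then each edge of $G$ incident to $b$ becomes an incidence, that is, a vertex--edge pair of $H$ at $b$, and $\deg_H(b)=\deg_G(b)\le d$, so $\Delta(H)\le d$. The key observation, which I would verify case by case against the conflict description above, is that two edges of $G$ conflict precisely when the corresponding incidences of $H$ are adjacent in the incidence sense: they share the vertex $b$, they are the two incidences of one $H$-edge, or the vertex of one is an endpoint of the edge of the other. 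Hence a strong edge-coloring of $G$ is the same as an incidence coloring of $H$ read off at the real vertices of $B$, and the target becomes $\chi_i(H)\le 2\Delta(H)\le 2d$.

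For the coloring I would use the palette $\{1,\dots,\chi'(H)\}\times\{1,2\}$. Take a proper edge-coloring $\phi$ of $H$ and color the incidence $(b,e)$ by $(\phi(e),s(b,e))$, where $s$ is chosen so that the two incidences of each $H$-edge receive distinct second coordinates (for instance, orient each edge and use head versus tail). Running through the three incidence-adjacency types shows that every conflict between incidences of \emph{distinct} edges is already resolved by $\phi$, because in each such case the two edges share a vertex and so have different $\phi$-colors, while the only conflict with equal first coordinate is between the two incidences of a single edge, which $s$ separates. This gives $\chi_i(H)\le 2\chi'(H)$, and hence $\chi_s'(G)\le 2d$ whenever $H$ is class $1$.

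The main obstacle is the residual case, and it is where I expect essentially all of the work and the case analysis to live. The naive estimate is genuinely poor: the square of the line graph $L(G)^2$ is essentially $(3d-1)$-regular, so greedy and degeneracy arguments deliver only about $3d$ colors, and even the clean two-coordinate scheme costs $2\chi'(H)$, which overshoots $2d$ exactly when $H$ is class $2$ (for example an odd cycle) or a multigraph of high multiplicity, where $\chi'(H)$ can reach $\lceil 3\Delta/2\rceil$. Closing the gap to the tight value $2d$ requires avoiding payment for the surplus edge-color, either by a local Kempe-type recoloring that reuses the $2d$ colors across the at-most-one extra color class, or by embedding $H$ into a supergraph of the same maximum degree admitting a $d$-edge-coloring and then restricting. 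This is precisely the substance of the classical bound $\chi_i(H)\le 2\Delta(H)$, and it is the part of the argument I expect to be hard.
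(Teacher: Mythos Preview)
The paper does not prove this lemma at all; it is quoted from Nakprasit~\cite{Na} and used as a black box in the proof of Claim~\ref{2v}. So there is no in-paper argument to compare your proposal against.

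On the substance of your proposal: the reduction you set up is correct, and in fact well known. Subdividing every edge of a loopless multigraph $H$ once produces exactly the bipartite graphs $G$ covered by the lemma (with $A$ the subdivision vertices), and under this correspondence a strong edge-coloring of $G$ is precisely an incidence coloring of $H$. So the statement $\chi_s'(G)\le 2d$ is \emph{equivalent} to $\chi_i(H)\le 2\Delta(H)$ for all loopless multigraphs $H$.

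The gap is that you then treat $\chi_i(H)\le 2\Delta(H)$ as a ``classical bound'' to be invoked, with only a sketch of how one might patch the class-2 case. It is not an independently available fact. For simple graphs the elementary bounds are $3\Delta-2$ (greedy) or $\Delta+O(\log\Delta)$ (Guiduli, via directed star arboricity); your two-coordinate scheme honestly costs $2\chi'(H)$, and since $H$ is a multigraph this can reach $3\Delta(H)$ by Shannon's theorem. Neither the Kempe-recoloring idea nor the ``embed into a class-1 supergraph of the same maximum degree'' idea comes with an argument, and the second one is false as stated (an odd cycle cannot be embedded in a $2$-regular class-1 graph). In the literature the inequality $\chi_i(H)\le 2\Delta(H)$ for multigraphs is typically \emph{deduced from} Nakprasit's lemma via exactly the equivalence you wrote down, not the other way around. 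So your proposal is, at present, a correct reformulation rather than a proof: the step you flag as ``the hard part'' is the entire content of the lemma.
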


The rest of this paper is organized as follows. In Section 2, we prove Theorem \ref{thm}. In Section 3, given the value of $W(G)$, we determine the largest possible value of $mad(G)$. In Section 4, we suggest two future goals.

\section{The proof of Theorem \ref{thm}}
It is easy to see that, if $G'$ is a subgraph of $G$, then $mad(G')\le mad(G)$ and $W(G')\le W(G)$.

Suppose that Theorem \ref{thm} is not true, and $H$ is a minimal counterexample to Theorem \ref{thm}, that is, $H$ is a graph with $W(H)\le 7$, $mad(H)<\frac{40}{13}$, and $\chi_s'(H)\ge 14$, but if we delete a vertex (and the edges on it) from $H$ and get a subgraph $H'$, then we will have $\chi_s'(H')\le 13$. 

For a strong $13$-edge-coloring $f:\ E(H')\longrightarrow \{1,\ 2,\ ...,\ 13\}$ of $H'$, we call $f$ a \emph{partial coloring} of $H$. For an edge $e\in E(H)\setminus E(H')$, let $L_f (e)$ be the set of the colors that can be used on $e$, i.e. the colors that have not been used on an edge in $H'$ which is of distance at most two from $e$. Observe that, if there are only $m\le 12$ edges in $H'$ of distance at most two from $e$, then $|L_f(e)|\ge 13-m$.

\begin{lemma}\label{lemma}
    Let $v$ be a vertex in $H$ with $k$ edges $vv_1,\ vv_2,\ ...,\ vv_k$ on it. We delete $v$ and these $k$ edges from $H$ to get a subgraph $H'$, so we have $E(H)\setminus E(H')=\{vv_1,\ vv_2,\ ...,\ vv_k\}$. Let $f:\ E(H')\longrightarrow \{1,\ 2,\ ...,\ 13\}$ be a partial coloring of $H$. If, up to re-ordering $vv_1,\ vv_2,\ ...,\ vv_k$, we have 
    \begin{align*}
        |L_f (vv_i)|\ge i
    \end{align*}
    for each $1\le i\le k$, then we can extend partial coloring $f$ to a strong $13$-edge-coloring of $H$.
\end{lemma}

\begin{proof}
    As $H'$ is a subgraph constructed by deleting a vertex $v$ and all the edges on $v$ from $H$, we have that for $e_1,\ e_2\in E(H')$, if $D_{H'}(e_1,\ e_2)\ge 3$, then $D_H (e_1,\ e_2)\ge 3$. This is because 
    \begin{itemize}
        \item if $D_H (e_1,\ e_2)=1$, then we must also have $D_{H'}(e_1,\ e_2)=1$, a contradiction;
        \item if $D_H (e_1,\ e_2)=2$ but $D_{H'}(e_1,\ e_2)\ge 3$, then there is an edge $vv_i\in E(H)\setminus E(H')$ such that $v\in e_1$ and $v_i\in e_2$, but $v\in e_1$ implies $e_1\in E(H)\setminus E(H')$, which contradicts the assumption that $e_1\in E(H')$.
    \end{itemize}
    
    So we can let each edge in $H'$ keep its color under $f$. Then we color $vv_1$ by a color in $L_f (vv_1)$. For $2\le j\le k$, after coloring $vv_{j-1}$, there is still at least $|L_f (vv_j)|-(j-1)\ge j-(j-1)=1$ color that can be used on $vv_j$. So we can color $vv_2,\ vv_3,\ ...,\ vv_k$ in order and get a strong $13$-edge-coloring of $H$.
\end{proof}

Now we use Lemma \ref{lemma} to prove some structural results about $H$. If a vertex has degree $d$, then we will call this vertex a $d$-vertex.

\begin{claim}\label{1v}
    There are no $1$-vertices in $H$.
\end{claim}

\begin{proof}
    Assume that $v$ is a $1$-vertex and $e$ is the only edge on $v$. Deleting $v$ and $e$ from $H$, we get a subgraph $H'$ with a strong $13$-edge-coloring $f$, which is a partial coloring of $H$. It is easy to check that, in $E(H')$, there are at most $9$ edges of distance at most $2$ from $e$, so $L_f(e)\ge 4$, hence by Lemma \ref{lemma}, we can extend $f$ to a strong $13$-edge-coloring of $H$, a contradiction.
\end{proof}

\begin{claim}
    There are no $6$-vertices in $H$.
\end{claim}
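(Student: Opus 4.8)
The plan is to argue by contradiction using the minimality of $H$. Suppose $v$ is a $6$-vertex, with neighbors $u_1,\dots,u_6$ and incident edges $e_i=vu_i$. Since $W(H)\le 7$ and $d(v)=6$, every neighbor $u_i$ has $d(u_i)\le 1$; but the previous claim rules out $1$-vertices, so this is already contradictory unless I have miscounted—hence the real content is that a $6$-vertex would force a pendant edge, which cannot exist. So first I would simply note: if $d(v)=6$ and $vu\in E(H)$, then $d(u)\le W(H)-d(v)\le 1$, and combined with the absence of $1$-vertices (and the fact that $H$ has no $0$-vertices, being connected with more than one vertex), we get a contradiction immediately.

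To make this airtight, I would first dispose of the degenerate possibility that $H$ is a single vertex or $H=K_2$: a single vertex has no edges so $\chi_s'=0\le 13$, and $K_2$ has $\chi_s'=1\le 13$, so neither is a counterexample; thus $H$ has at least two edges and, being connected, every vertex has degree at least $1$. Then the argument above applies verbatim. If for some reason one wanted a Corollary~\ref{cor}-style argument instead (in case the edge-weight bound were not invoked directly), I would count the edges of distance at most $2$ from $e_1=vu_1$: these are the other five edges at $v$, the at most $d(u_1)-1$ other edges at $u_1$, and the edges incident to $u_2,\dots,u_6$; but since each $d(u_i)\le 1$ this count collapses, again contradicting the structure.

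I do not expect any genuine obstacle here: the statement is an immediate consequence of the edge-weight hypothesis $W(H)\le 7$ together with the already-established nonexistence of $1$-vertices. The only thing to be careful about is the trivial small cases ($|V(H)|\le 2$), which must be excluded before asserting that all vertices have degree at least $1$; once that is handled, the claim is one line. I would therefore write the proof as: assume $d(v)=6$; for any neighbor $u$ of $v$ we have $d(v)+d(u)\le 7$, so $d(u)\le 1$; since $H$ is connected on at least two vertices, $d(u)\ge 1$, so $d(u)=1$, contradicting the previous claim.
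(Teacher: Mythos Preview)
Your argument is correct and is essentially the same as the paper's: a $6$-vertex forces each of its neighbors to have degree at most $1$ by the edge-weight bound $W(H)\le 7$, contradicting the previous claim that $H$ has no $1$-vertices. The paper states this in one line without the extra care about tiny graphs or the alternative Corollary~\ref{cor} count, but the content is identical.
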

\begin{proof}
    This is clear because, under the assumption that $W(H)\le 7$, a $6$-vertex can only be adjacent to $1$-vertices.
\end{proof}

\begin{claim}\label{2v}
    A $2$-vertex in $H$ is adjacent to two $4$-vertices.
\end{claim}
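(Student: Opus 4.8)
The plan is to take a $2$-vertex $v$ of $H$ with neighbors $u,w$, set $a=d(u)$ and $b=d(w)$, and observe that $a,b\in\{2,3,4,5\}$ since $H$ has no $1$-vertices and $W(H)\le 7$. It then remains to exclude every pair $(a,b)\neq(4,4)$. I would first prove an auxiliary fact about $5$-vertices, use it to kill every case in which $a$ or $b$ is $5$, and handle the remaining small-degree cases with Corollary \ref{cor} and Lemma \ref{lemma}.

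\emph{Auxiliary fact.} If $x$ is a $5$-vertex, $y\in N(x)$, and $z$ is the second neighbor of $y$ (here $d(y)=2$ since $d(y)\le 7-5$ and $H$ has no $1$-vertices, so $z$ is well defined), then $d(z)=5$. Indeed, every edge at distance at most $2$ from $xy$ is incident with $x$ (at most $4$ besides $xy$), with $y$ (only $yz$), with one of the four other neighbors of $x$, each of which has degree $2$ and hence contributes at most one further edge (at most $4$ in all), or with $z$ (at most $d(z)-1$); so there are at most $8+d(z)$ of them, and Corollary \ref{cor} forces $8+d(z)\ge 13$, whence $d(z)=5$ (as $H$ has no $6$-vertices).

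Suppose $b=5$, the case $a=5$ being symmetric. The auxiliary fact with $x=w$, $y=v$, $z=u$ gives $a=5$, so $a=b=5$. Every $5$-vertex has all neighbors of degree $2$, and by the auxiliary fact every $2$-vertex adjacent to a $5$-vertex has both neighbors of degree $5$; hence, starting from $u$ and using connectivity of $H$, an induction on distance from $u$ shows that every vertex at even distance is a $5$-vertex and every vertex at odd distance is a $2$-vertex. Thus $H$ is bipartite with the $5$-vertices on one side and the $2$-vertices on the other, so Lemma \ref{bipartite} gives $\chi_s'(H)\le 2\cdot 5=10$, contradicting $\chi_s'(H)\ge 14$. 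Therefore $a,b\le 4$. If in addition $(a,b)\neq(4,4)$ we may assume $a\le 3$: delete $e_1=uv$ and $e_2=vw$ to get a graph $H'$, which by minimality has a strong $13$-coloring $f$. Every edge of $H'$ at distance at most $2$ from $e_1$ is incident with $u$ but not equal to $uv$ (at most $a-1$ edges), with a neighbor of $u$ distinct from $v$ but not with $u$ (each such neighbor has degree at most $7-a$, so at most $(a-1)(6-a)$ edges), or with $w$ but not equal to $vw=e_2\notin E(H')$ (at most $b-1$ edges), so $|L_f(e_1)|\ge 13-(a-1)(7-a)-(b-1)\ge 2$ since $a\le 3$ and $b\le 4$; by symmetry $|L_f(e_2)|\ge 2$. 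Lemma \ref{lemma} now extends $f$ to a strong $13$-coloring of $H$, a contradiction, so $a=b=4$.

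I expect the real obstacle to be the case $a=b=5$. It cannot be dispatched by deleting a bounded set of edges and invoking Lemma \ref{lemma}, since the color lists of $uv$ and $vw$ stay of size $1$; one must instead notice that the configuration propagates and forces $H$ to be a bipartite graph to which Lemma \ref{bipartite} applies. The delicate points are the careful edge count in the auxiliary fact (so that Corollary \ref{cor} forces $d(z)\ge 5$) and a clean formulation of the propagation argument.
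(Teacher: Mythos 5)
Your proof is correct and follows essentially the same route as the paper: a case analysis on the neighbor degrees, killed by edge-counting against Corollary \ref{cor} (your auxiliary fact and two-edge-deletion argument are minor repackagings of the paper's direct count around the edge to the higher-degree neighbor), with the $(5,5)$ case handled exactly as in the paper by showing $H$ would be bipartite with parts the $2$-vertices and $5$-vertices and invoking Lemma \ref{bipartite}. Your identification of $(5,5)$ as the one case that cannot be dispatched by list-size arguments matches the paper's treatment.
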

\begin{proof}
    Assume that $v$ is a $2$-vertex and $e_1=vv_1$ and $e_2=vv_2$ are the two edges on $v$. Deleting $v$, $e_1$, and $e_2$ from $H$, we get a subgraph $H'$ with a strong $13$-edge-coloring $f$. By symmetry, we may assume that $d(v_1)\le d(v_2)$. 
    
    First, for $(d(v_1),\ d(v_2))\in\{(2,\ 2),\ (2,\ 3),\ (2,\ 4),\ (2,\ 5),\ (3,\ 3),\ (3,\ 4),\ (3,\ 5)\}$, by checking the number of edges in $H'$ of distance at most $2$ from $e_1$ and the number of edges in $H'$ of distance at most $2$ from $e_2$, we have

    \begin{table}[H]
\begin{tabular}{|l|l|l|l|l|l|l|l|}
\hline
$(d(v_1),\ d(v_2))$           & $(2,\ 2)$ & $(2,\ 3)$ & $(2,\ 4)$ & $(2,\ 5)$ & $(3,\ 3)$ & $(3,\ 4)$ & $(3,\ 5)$ \\ \hline
$(L_f(e_1)\ge,\ L_f(e_2)\ge)$ & $(7,\ 7)$ & $(6,\ 4)$ & $(5,\ 3)$ & $(4,\ 4)$ & $(3,\ 3)$ & $(2,\ 2)$ & $(1,\ 3)$ \\ \hline
\end{tabular}
\end{table}
    So by Lemma \ref{lemma}, in each case, we can extend $f$ to a strong $13$-edge-coloring of $H$, a contradiction.

    Then, assume that $(d(v_1),\ d(v_2))=(4,\ 5)$. In this case, in $H'$, there are at most $13$ edges of distance at most $2$ from $e_1$, and at most $11$ edges of distance at most $2$ from $e_2$. Let $e_3\neq e_2$ be another edge on $v_2$. In $H'$, excluding $e_3$ itself, there are at most $11$ edges of distance at most $2$ from $e_3$. Now we erase the color of $e_3$ under $f$ and get a new partial coloring $f'$. Under $f'$, we have $L_{f'}(e_1)\ge 1$, $L_{f'}(e_2)\ge 3$, and $L_{f'}(e_3)\ge 2$, so by Lemma \ref{lemma}, we can find feasible colors for these three edges in the order of $e_1$, $e_3$, $e_2$, and extend $f'$ to a strong $13$-edge-coloring of $H$, a contradiction. 
    
    Finally, assume that $(d(v_1),\ d(v_2))=(5,\ 5)$. As $W(H)\le 7$, a $5$-vertex in $H$ can only be adjacent to $1$-vertices and $2$-vertices. By Claim \ref{1v}, we know there are no $1$-vertices in $H$. So every $5$-vertex in $H$ is adjacent to five $2$-vertices. As there are no $2$-vertices whose one neighbor is of degree at most $4$ and the other neighbor is a $5$-vertex, we know that $H$ is a bipartite graph with partition $(A,\ B)$, where all vertices in $A$ are $2$-vertices and all vertices in $B$ are $5$-vertices. Then by Lemma \ref{bipartite}, we have $\chi_s'(H)\le 10$, a contradiction.

    So $(d(v_1),\ d(v_2))=(4,\ 4)$ is the only possible case.
\end{proof}

We would like to mention that the "erasing a color" technique is also employed in \cite{LLH} and \cite{LLY}.

In $H$, $5$-vertices can only be adjacent to $1$-vertices and $2$-vertices. But $1$-vertices do not exist by Claim \ref{1v}, and $2$-vertices are only adjacent to $4$-vertices by Claim \ref{2v}, so there are no $5$-vertices.

\begin{claim}\label{5v}
    There are no $5$-vertices in $H$.
\end{claim}

So, there are only $2$-vertices, $3$-vertices, and $4$-vertices in $H$.

Then, by Claim \ref{2v}, we know that the $3$-vertices in $H$ can only be adjacent to $3$-vertices and $4$-vertices. We divide the $3$-vertices into four classes $A$, $B$, $C$, and $D$, and a $3$-vertex will be called
\begin{itemize}
    \item a $3(A)$-vertex if it is adjacent to three $4$-vertices;
    \item a $3(B)$-vertex if it is adjacent to two $4$-vertices and one $3$-vertex;
    \item a $3(C)$-vertex if it is adjacent to one $4$-vertex and two $3$-vertices;
    \item a $3(D)$-vertex if it is adjacent to three $3$-vertices.
\end{itemize}

\begin{claim}\label{3v1}
    A $3(D)$-vertex in $H$ is adjacent to at most one $3(D)$-vertex.
\end{claim}

\begin{proof}
    Assume on the contrary that $u$ is a $3(D)$-vertex and it is adjacent to two $3(D)$-vertices $v$ and $w$, and another $3$-vertex $x$. Deleting $u$ and the three edges $uv$, $uw$, and $ux$, we get a subgraph $H'$ with a strong $13$-edge-coloring $f$. In $H'$, there are at most $10$ edges of distance at most $2$ from $uv$, at most $10$ edges of distance at most $2$ from $uw$, and at most $12$ edges of distance at most $2$ from $ux$. So $L_f(uv)\ge 3$, $L_f(uw)\ge 3$, and $L_f(ux)\ge 1$. By Lemma \ref{lemma}, we can extend $f$ to a strong $13$-edge-coloring of $H$, a contradiction.
\end{proof}

\begin{claim}\label{3v2}
    If a $3(D)$-vertex is adjacent to another $3(D)$-vertex, then it is adjacent to two $3(B)$-vertices. 
\end{claim}

\begin{proof}
    Assume that $u$ is a $3(D)$-vertex and it is adjacent to another $3(D)$-vertex $v$ and two other $3$-vertices $w$ and $x$. Assume on the contrary that $w$ is a $3(C)$-vertex. Deleting $u$ and the three edges $uv$, $uw$, and $ux$, we get a subgraph $H'$ with a strong $13$-edge-coloring $f$. In $H'$, there are at most $10$ edges of distance at most $2$ from $uv$, at most $11$ edges of distance at most $2$ from $uw$, and at most $12$ edges of distance at most $2$ from $ux$. So $L_f(uv)\ge 3$, $L_f(uw)\ge 2$, and $L_f(ux)\ge 1$. By Lemma \ref{lemma}, we can extend $f$ to a strong $13$-edge-coloring of $H$, a contradiction.
\end{proof}

Then we look at the $4$-vertices.

\begin{claim}\label{4v}
    A $4$-vertex in $H$ is adjacent to at most one $2$-vertex.
\end{claim}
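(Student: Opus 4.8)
The plan is to argue by contradiction from the minimality of $H$, in the style of the earlier claims. Suppose some $4$-vertex $v$ has two $2$-vertex neighbours $u_1$ and $u_2$, and write $N(v)=\{u_1,u_2,x_3,x_4\}$. By Claim \ref{2v}, $u_1$ has a second neighbour $w_1$ and $u_2$ a second neighbour $w_2$, and both $w_1$ and $w_2$ are $4$-vertices; in particular $w_1,w_2\neq v$. The key design choice is which edges to delete: I would remove all four of $e_1=vu_1$, $e_2=vu_2$, $e_3=u_1w_1$, $e_4=u_2w_2$ (these are pairwise distinct, using $u_1\neq u_2$, $w_1,w_2\neq v$, and the fact that a $4$-vertex is not a $2$-vertex). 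Deleting only $e_1$ and $e_2$ would be too weak, since those lists would only be guaranteed to have size one; but deleting $e_3$ and $e_4$ as well isolates $u_1$ and $u_2$ in the remaining graph, which is what makes $L_f(e_1)$ and $L_f(e_2)$ long. Since $H$ is a minimal counterexample and at least one edge has been removed, $H'=H-\{e_1,e_2,e_3,e_4\}$ has a strong $13$-edge-coloring $f$; in $H'$ the vertices $u_1,u_2$ are isolated and $v$ has degree $2$.

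Next I would estimate the four lists. In $H'$ the edges within distance two of $e_1=vu_1$ are precisely the edges incident to $x_3$ or to $x_4$ (nothing is incident to the isolated vertex $u_1$); since $v$ is a $4$-vertex, $W(H)\le 7$ forces $d(x_3),d(x_4)\le 3$, so there are at most six such edges and $|L_f(e_1)|\ge 7$, and symmetrically $|L_f(e_2)|\ge 7$. For $e_3=u_1w_1$, again nothing is incident to $u_1$, so the edges of $H'$ within distance two of $e_3$ are the edges incident to a neighbour of $w_1$ other than $u_1$; there are at most three such neighbours, each of degree at most $3$ (as $w_1$ is a $4$-vertex and $W(H)\le 7$), hence at most nine such edges, and $|L_f(e_3)|\ge 4$, and symmetrically $|L_f(e_4)|\ge 4$. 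So two of the four lists have size at least $7$ and the other two at least $4$; ordering the deleted edges as $e_3,e_4,e_1,e_2$ satisfies the hypothesis of Lemma \ref{lemma}, which then extends $f$ to a strong $13$-edge-coloring of $H$ --- contradicting $\chi_s'(H)\ge 14$.

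I do not expect a genuine obstruction; the one step that deserves care is the verification of the two list estimates, and this is exactly where the hypothesis $W(H)\le 7$ does the real work, by capping the degrees of the \emph{second-layer} vertices $x_3,x_4,w_1,w_2$ at $3$. All four counts above are upper bounds on the number of already-coloured edges constraining a given $e_i$, so they survive any coincidences among $w_1,w_2,x_3,x_4$: for instance $w_1=w_2$ merely deletes an extra edge at $w_1$ and so only enlarges $L_f(e_3)$ and $L_f(e_4)$, and the triangle-freeness of $H$ (Lemma \ref{triangle}), which rules out $w_i\in\{x_3,x_4\}$, is not even needed for the bounds. Once these two estimates are checked, the claim follows.
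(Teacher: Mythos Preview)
Your list-size estimates are wrong, because you are computing distance in $H'$ rather than in $H$. The set $L_f(e)$ must exclude every colour appearing on an edge of $H'$ at distance at most two from $e$ \emph{in $H$}: the deleted edges still transmit constraints, since the final colouring has to be a strong edge-colouring of $H$. Concretely, for $e_1=vu_1$ you forgot the three edges on $w_1$ other than $e_3$, which are at distance two from $e_1$ via the (deleted) edge $e_3=u_1w_1$; so the forbidden set has size at most $9$, not $6$, and $|L_f(e_1)|\ge 4$, not $7$. Likewise for $e_3=u_1w_1$ you forgot $vx_3$ and $vx_4$, which are at distance two from $e_3$ via the (deleted) edge $e_1=vu_1$; so the forbidden set has size at most $11$, and $|L_f(e_3)|\ge 2$, not $4$. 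With the corrected bounds $(|L_f(e_3)|,|L_f(e_4)|,|L_f(e_1)|,|L_f(e_2)|)\ge(2,2,4,4)$ your ordering still satisfies Lemma~\ref{lemma}, so the overall strategy survives; but the argument as written is not correct.

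That said, the paper's own proof is far shorter and avoids all of this. It deletes only the single edge $vv_1$ and applies Corollary~\ref{cor} directly: the edges at distance at most two from $vv_1$ are the $4$ edges adjacent to it, the $1$ further edge on the $2$-vertex $v_2$, at most $2+2$ further edges on $x_3$ and $x_4$ (each of degree $\le 3$), and the $3$ further edges on the $4$-vertex $w_1$, for a total of at most $12$. The whole point of the claim is that the second $2$-neighbour $v_2$ already knocks the count down to $12$, so there is no need to delete four edges and juggle lists.
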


\begin{proof}
    Assume on the contrary that $u$ is a $4$-vertex and it is adjacent to two $2$-vertices $v$ and $w$, and two other vertices $x$ and $y$. Deleting $u$ and the four edges $uv$, $uw$, $ux$, and $uy$, we get a subgraph $H'$ with a strong $13$-edge-coloring $f$. 
    
    Recall that, by Claim \ref{5v}, we do not have $5$-vertices in $H$. So in $H'$, there are at most $9$ edges of distance at most $2$ from $uv$, at most $9$ edges of distance at most $2$ from $uw$, at most $12$ edges of distance at most $2$ from $ux$, and at most $12$ edges of distance at most $2$ from $uy$. Let $z\neq u$ be the other vertex adjacent to $v$ (it is possible that $z\in\{w,\ x,\ y\}$). In $H'$, excluding $vz$ itself, there are at most $9$ edges of distance at most $2$ from $vz$. Now we erase the color of $vz$ under $f$ and get a new partial coloring $f'$. Under $f'$, we have $L_{f'}(uv)\ge 5$, $L_{f'}(uw)\ge 5$, $L_{f'}(ux)\ge 2$, $L_{f'}(uy)\ge 2$, and $L_{f'}(vz)\ge 4$, so by Lemma \ref{lemma}, we can find feasible colors for these five edges, and extend $f'$ to a strong $13$-edge-coloring of $H$, a contradiction.
\end{proof}

Now, for $d\in\{2,\ 3,\ 4\}$, we let each $d$-vertex $v$ in $H$ have initial charge $\omega(v)=d-\frac{40}{13}$. We have assumed that $mad(H)<\frac{40}{13}$, so $\sum_{v\in V(H)}\omega(v)<0$. We make the following discharging rules.
\begin{itemize}
    \item Each $4$-vertex gives $\frac{7}{13}$ to each $2$-vertex adjacent to it, and gives $\frac{5}{39}$ to each $3$-vertex adjacent to it.
    \item Each $3(B)$-vertex gives $\frac{1}{26}$ to the $3$-vertex adjacent to it.
    \item Each $3(C)$-vertex gives $\frac{1}{39}$ to each $3$-vertex adjacent to it.
\end{itemize}

After the discharging process, each vertex $v$ gets a new charge $\omega'(v)$, and the total charge in $H$ does not change.

By all the claims we made, we can check that $\omega'(v)\ge 0$ for every $v\in V(H)$ as follows. Recall that a $4$-vertex has at most one $2$-neighbor by Claim \ref{4v}.
\begin{itemize}
    \item If $v$ is a $4$-vertex which is adjacent to one $2$-vertex and three $3$-vertices, then $\omega'(v)=4-\frac{40}{13}-\frac{7}{13}-3\cdot\frac{5}{39}=0$.
    \item If $v$ is a $4$-vertex which is adjacent to four $3$-vertices, then $\omega'(v)=4-\frac{40}{13}-4\cdot\frac{5}{39}>0$.
    \item If $v$ is a $3(A)$-vertex, then $\omega'(v)=3-\frac{40}{13}+3\cdot\frac{5}{39}>0$.
    \item If $v$ is a $3(B)$-vertex, then $\omega'(v)\ge 3-\frac{40}{13}+2\cdot\frac{5}{39}-\frac{1}{26}>0$.
    \item If $v$ is a $3(C)$-vertex, then $\omega'(v)\ge 3-\frac{40}{13}+\frac{5}{39}-2\cdot\frac{1}{39}=0$.
    \item If $v$ is a $3(D)$-vertex which is adjacent to another $3(D)$-vertex, then by Claim \ref{3v2}, we have $\omega'(v)=3-\frac{40}{13}+2\cdot\frac{1}{26}=0$.
    \item If $v$ is a $3(D)$-vertex which is not adjacent to a $3(D)$-vertex, then $\omega'(v)\ge 3-\frac{40}{13}+3\cdot\frac{1}{39}=0$.
    \item If $v$ is a $2$-vertex, then by Claim \ref{2v}, we have $\omega'(v)=2-\frac{40}{13}+2\cdot\frac{7}{13}=0$.
\end{itemize}

Now, for every vertex $v$ in $H$, we have $\omega'(v)\ge 0$, so $0\le \sum_{v\in V(H)}\omega'(v)=\sum_{v\in V(H)}\omega(v)<0$, a contradiction. Thus, such a minimal counterexample $H$ does not exist, and Theorem \ref{thm} is true.

\section{On the value of $mad(G)$}
In Section 1, we mentioned some results for graphs with restricted maximum average degrees. For example, it was proved by Lu et al. \cite{LLH} that if $G$ is a graph with $\Delta(G)=5$ and $mad(G)<\frac{22}{5}$, then Conjecture \ref{conj1} is true. Clearly, if $\Delta(G)=5$, then the largest possible value of $mad(G)$ is $5$, so we can say that the bound $mad(G)<\frac{22}{5}$ is $\frac{3}{5}$ away from the best possible bound $mad(G)\le 5$.

So, how good is our bound $mad(G)<\frac{40}{13}$? What is the largest possible value of $mad(G)$ if $G$ is a graph with $W(G)=7$?

\begin{proposition}
    If $W(G)=2k+1$ is odd, then
    \begin{align*}
        mad(G)\le \frac{2k(k+1)}{2k+1}.
    \end{align*}
    If $W(G)=2k$ is even, then 
    \begin{align*}
        mad(G)\le k.
    \end{align*}
    Both equalities can be attained.
\end{proposition}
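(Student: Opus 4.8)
The plan is to bound $\bar d(G')$ for an arbitrary nonempty subgraph $G'\subseteq G$ and then maximize over $G'$. Write $w=W(G)$, $n=|V(G')|$, $m=|E(G')|$, and $d_v=d_{G'}(v)$. Since $G'\subseteq G$, every edge $uv\in E(G')$ satisfies $d_u+d_v\le d_G(u)+d_G(v)\le w$, so from the identity $\sum_v d_v^2=\sum_{uv\in E(G')}(d_u+d_v)$ together with $2m=\sum_v d_v$ we get
\[
    \sum_v d_v^2 \;\le\; w m \;=\; \tfrac{w}{2}\sum_v d_v .
\]
This is a ``second moment $\le$ first moment'' inequality, and the task is to convert it into a bound on the first moment $\sum_v d_v$ alone, i.e. on $\bar d(G')=\tfrac1n\sum_v d_v$.

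The step that makes this work is using that the $d_v$ are integers. Set $a=\lfloor w/2\rfloor$ and $b=\lceil w/2\rceil$; then $a$ and $b$ are consecutive integers (equal if $w$ is even), so no integer lies strictly between them, whence $(d_v-a)(d_v-b)\ge 0$ for every $v$. Since $a+b=w$, this says $d_v^2\ge w\,d_v-ab$. Summing over $v$ gives $\sum_v d_v^2\ge w\sum_v d_v-ab\,n$, and combining with the previous display yields $w\sum_v d_v-ab\,n\le\tfrac{w}{2}\sum_v d_v$, that is,
\[
    \bar d(G')=\frac1n\sum_v d_v \;\le\; \frac{2ab}{w}.
\]
When $w=2k$ this reads $\bar d(G')\le k$, and when $w=2k+1$ it reads $\bar d(G')\le\frac{2k(k+1)}{2k+1}$. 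Since this holds for every $G'\subseteq G$, the asserted upper bounds on $mad(G)$ follow at once.

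For sharpness I would take $G=K_{k,k}$ when $W(G)=2k$ and $G=K_{k,k+1}$ when $W(G)=2k+1$. The maximum edge weight is exactly $w$ in each case: the graph $K_{k,k}$ is $k$-regular, and $K_{k,k+1}$ has edges joining degree-$(k+1)$ vertices to degree-$k$ vertices, so $W(K_{k,k+1})=2k+1$. Being regular, $mad(K_{k,k})=k$; and $\bar d(K_{k,k+1})=\frac{2k(k+1)}{2k+1}$, which by the bound just proved is also $mad(K_{k,k+1})$. Hence both equalities are attained.

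The only point worth flagging is that integrality of the degrees is genuinely needed: replacing $(d_v-a)(d_v-b)\ge 0$ by the real-variable estimate $\sum_v d_v^2\ge(\sum_v d_v)^2/n$ would only give $\bar d(G')\le w/2$, which in the odd case $w=2k+1$ is strictly weaker than $\frac{2k(k+1)}{2k+1}$. Apart from choosing the right quadratic, everything is a short computation.
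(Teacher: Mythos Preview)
Your proof is correct and takes a genuinely different, more streamlined route than the paper's. The paper stratifies the vertex set of $G'$ by degree, letting $X_\ell$ count the $\ell$-vertices, and encodes the edge-weight constraint as a family of $k$ inequalities $\sum_{i=1}^t iX_i\ge\sum_{j=1}^t(2k+1-j)X_{2k+1-j}$ for $1\le t\le k$ (since a vertex of degree at least $2k+1-t$ can only neighbor vertices of degree at most $t$); it then sums and rescales these inequalities to squeeze out the bound. You bypass the stratification entirely: the identity $\sum_v d_v^2=\sum_{uv\in E(G')}(d_u+d_v)\le wm$ packages the edge-weight constraint in one line, and the integrality trick $(d_v-\lfloor w/2\rfloor)(d_v-\lceil w/2\rceil)\ge 0$ converts it directly into the average-degree bound $\bar d(G')\le 2ab/w$. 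Your argument treats the even and odd cases uniformly in a single computation (the paper writes out only the odd case), and the extremal examples $K_{k,k}$ and $K_{k,k+1}$ match. The paper's method makes the degree-distribution constraints more explicit, which might be useful if one wanted finer structural information, but for the bare bound your second-moment approach is shorter and more conceptual.
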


The results in this proposition are intuitive. Here we give a formal proof of the odd case, and it is very similar for the even case.

If $W(G)=2k+1$, then the degree of a vertex in $G$ is between $1$ and $2k$. Note that here we must have $k\ge 1$, because it is not possible to have $W(G)=1$. Let $G'$ be a subgraph of $G$. For $1\le \ell\le 2k$, let $V_\ell$ be the set of $\ell$-vertices in $G'$.  We have $W(G')\le 2k+1$, so for any $j\le k$, a $(2k+1-j)$-vertex can only be adjacent to $i$-vertices with $i\le j$. This means for any $1\le t\le k$, we have
\begin{align*}
    \sum_{i=1}^t i|V_i|\ge \sum_{j=1}^t (2k+1-j)|V_{2k+1-j}|.
\end{align*}

For the average degree of $G'$, we have
\begin{align*}
    \bar{d}(G')=\frac{\sum_{\ell=1}^{2k} \ell|V_\ell|}{\sum_{l=1}^{2k}|V_l|},
\end{align*}

Our goal is to show $\bar{d}(G')\le\frac{2k(k+1)}{2k+1}$ for any subgraph $G'$ of $G$. It suffices to prove the following more general result.

\begin{proposition}
    Let $X_1,\ X_2,\ ...,\ X_{2k}$ be non-negative integers with $\sum_{l=1}^{2k}X_l>0$. Assume that we have
    \begin{align}
        1 X_1&\ge 2k X_{2k}, \label{ineq1}\\
        1 X_1+2 X_2&\ge 2k X_{2k}+(2k-1) X_{2k-1}, \label{ineq2}\\
        &... \nonumber \\
        1 X_1+2 X_2+...+(k-1) X_{k-1}&\ge 2k X_{2k}+(2k-1) X_{2k-1}+...+(k+2) X_{k+2}, \label{ineqk-1}\\
        1 X_1+2 X_2+...+k X_k&\ge 2k X_{2k}+(2k-1) X_{2k-1}+...+(k+1)X_{k+1}.\label{ineqk}
    \end{align}
    Then 
    \begin{align*}
        \frac{\sum_{\ell=1}^{2k} \ell X_\ell}{\sum_{l=1}^{2k}X_l}\le \frac{2k(k+1)}{2k+1}.
    \end{align*}
\end{proposition}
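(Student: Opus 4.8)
The plan is to first clear denominators: since $\sum_{l=1}^{2k} X_l > 0$, the claimed bound is equivalent to $\sum_{\ell=1}^{2k} c_\ell X_\ell \le 0$, where $c_\ell := (2k+1)\ell - 2k(k+1)$. Because $c_\ell$ is increasing in $\ell$ with $c_k = -k$ and $c_{k+1} = k+1$, we have $c_\ell < 0$ exactly for $\ell \le k$ and $c_\ell > 0$ exactly for $\ell \ge k+1$, so the goal becomes the single inequality $\sum_{i=1}^{k}(-c_i)X_i \ge \sum_{\ell=k+1}^{2k} c_\ell X_\ell$: the small-degree terms must dominate the large-degree terms with these specific weights.

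I would derive this from a non-negative linear combination of the hypotheses \eqref{ineq1}--\eqref{ineqk}. Writing the $t$-th hypothesis as $\sum_{i=1}^{t} i X_i \ge \sum_{j=1}^{t}(2k+1-j)X_{2k+1-j}$, multiply it by $\lambda_t \ge 0$ and sum over $1 \le t \le k$; collecting coefficients (the coefficient of $X_i$ on the left becomes $i\sum_{s \ge i}\lambda_s$, and that of $X_{2k+1-j}$ on the right becomes $(2k+1-j)\sum_{s \ge j}\lambda_s$) this produces $\sum_{i=1}^{k} i\Lambda_i X_i \ge \sum_{j=1}^{k}(2k+1-j)\Lambda_j X_{2k+1-j}$, where $\Lambda_t := \sum_{s=t}^{k}\lambda_s$. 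The plan is to choose the weights so that $\Lambda_t = \frac{2k(k+1)}{t} - (2k+1)$, which forces $\lambda_t = \frac{2k(k+1)}{t(t+1)}$ for $1 \le t \le k-1$ and $\lambda_k = 1$ — all visibly non-negative, and a telescoping check confirms $\sum_{s=t}^{k}\lambda_s = \Lambda_t$.

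For this choice, $i\Lambda_i = 2k(k+1) - (2k+1)i = -c_i$, so the left side of the combined inequality is exactly $\sum_{i=1}^{k}(-c_i)X_i$. For the right side, I would verify $(2k+1-j)\Lambda_j \ge c_{2k+1-j}$ for each $1 \le j \le k$; clearing fractions, this reduces to $j(2k+1-j) \le k(k+1)$, which holds because $t \mapsto t(2k+1-t)$ is a downward parabola with vertex at $t = k+\tfrac12$, so for integer $t \le k$ its value is at most its value $k(k+1)$ at $t = k$. Since every $X_{2k+1-j} \ge 0$, the combined inequality then yields $\sum_{i=1}^{k}(-c_i)X_i \ge \sum_{j=1}^{k} c_{2k+1-j}X_{2k+1-j} = \sum_{\ell=k+1}^{2k} c_\ell X_\ell$, which is precisely the reformulated goal; dividing back by $\sum_l X_l$ finishes the proof.

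The only genuinely delicate point is discovering weights $\Lambda_t$ that are simultaneously large enough to dominate the coefficients $c_{2k+1-j}$ on the large-degree side yet small enough that $i\Lambda_i \le -c_i$ on the small-degree side; the gap between these two demands shrinks to zero exactly at $t = k$, where the parabola bound $j(2k+1-j) \le k(k+1)$ is tight, and that tightness is what makes the choice above work. Everything following the selection of the $\lambda_t$ is routine arithmetic.
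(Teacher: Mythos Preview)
Your proof is correct, but it proceeds differently from the paper's. The paper splits the work into two pieces: from the last hypothesis \eqref{ineqk} alone it gets $k\sigma_1\ge(k+1)\sigma_2$ where $\sigma_1=\sum_{i\le k}X_i$ and $\sigma_2=\sum_{i>k}X_i$, and from the \emph{unweighted} sum of \eqref{ineq1}--\eqref{ineqk-1} it deduces $\sum_\ell \ell X_\ell\le k\sigma_1+(k+1)\sigma_2$; the two are then combined through the nonlinear identity $\frac{k\sigma_1+(k+1)\sigma_2}{\sigma_1+\sigma_2}=k+1-\frac{1}{1+\sigma_2/\sigma_1}$. You instead find a \emph{single} non-negative linear combination of all $k$ hypotheses, with the telescoping weights $\lambda_t=\tfrac{2k(k+1)}{t(t+1)}$ for $t<k$ and $\lambda_k=1$, that after the parabola estimate $j(2k+1-j)\le k(k+1)$ directly yields the cleared-denominator target $\sum_\ell\bigl((2k+1)\ell-2k(k+1)\bigr)X_\ell\le 0$. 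Your route is more streamlined and purely linear once the weights are in hand; the paper's route avoids having to guess any weights but pays for it with an extra algebraic step and a case split at $k=1$.
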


\begin{proof}
    Let $\sigma_1=\sum_{i=1}^k X_i$ and $\sigma_2=\sum_{j=1}^k X_{2k+1-j}$.
    
    First, by inequality \eqref{ineqk}, it is clear that
    \begin{align*}
        k\sigma_1&\ge (k+1)\sigma_2.
    \end{align*}
    Here we have $\sigma_1>0$ because $\sigma_1+\sigma_2=\sum_{l=1}^{2k}X_l>0$. So
    \begin{align}
        \frac{\sigma_2}{\sigma_1}&\le \frac{k}{k+1}. \label{ineqa}
    \end{align}
    Then, we will show that 
    \begin{align}
        k\sigma_1+(k+1)\sigma_2\ge \sum_{\ell=1}^{2k} \ell X_\ell, \label{ineqb}
    \end{align}
    which is equivalent to
    \begin{align}
        (k-1)X_1+(k-2)X_2+...+1X_{k-1}\ge(k-1)X_{2k}+(k-2)X_{2k-1}+...+1X_{k+2}.\label{ineqc}
    \end{align}

    If $k=1$, then what we need is $0\ge 0$, which is trivially true. Then, to show inequality \eqref{ineqc} for $k\ge 2$, we take the sum of the $k-1$ inequalities \eqref{ineq1}, \eqref{ineq2}, ..., \eqref{ineqk-1}, and get
    \begin{align*}
        &(k-1)1X_1+(k-2)2X_2+...+1(k-1)X_{k-1}\\
        \ge& (k-1)2kX_{2k}+(k-2)(2k-1)X_{2k-1}+...+1(k+2)X_{k+2}.
    \end{align*}
    Dividing it by $k-1$, we get
    \begin{align}
        &(k-1)\frac{1}{k-1}X_1+(k-2)\frac{2}{k-1}X_2+...+1\frac{k-1}{k-1}X_{k-1} \nonumber\\
        \ge& (k-1)\frac{2k}{k-1}X_{2k}+(k-2)\frac{2k-1}{k-1}X_{2k-1}+...+1\frac{k+2}{k-1}X_{k+2}.\label{ineqd}
    \end{align}
    Observe that
    \begin{align*}
        (k-1)X_1+(k-2)X_2+...+1X_{k-1}\ge LHS\ in\ \eqref{ineqd},
    \end{align*}
    and 
    \begin{align*}
        RHS\ in\ \eqref{ineqd}\ge (k-1)X_{2k}+(k-2)X_{2k-1}+...+1X_{k+2}.
    \end{align*}
    So inequality \eqref{ineqc} is true, hence inequality \eqref{ineqb} is true.
    
    Inequality \eqref{ineqb} tells us that 
    \begin{align*}
        \frac{\sum_{\ell=1}^{2k} \ell X_\ell}{\sum_{l=1}^{2k}X_l}&\le \frac{k\sigma_1+(k+1)\sigma_2}{\sum_{l=1}^{2k}X_l}\\
        &=\frac{k\sigma_1+(k+1)\sigma_2}{\sigma_1+\sigma_2}\\
        &=\frac{k+(k+1)\frac{\sigma_2}{\sigma_1}}{1+\frac{\sigma_2}{\sigma_1}}\\
        &=k+1-\frac{1}{1+\frac{\sigma_2}{\sigma_1}}.
    \end{align*}
    Then by inequality \eqref{ineqa}, we have
    \begin{align*}
        \frac{\sum_{\ell=1}^{2k} \ell X_\ell}{\sum_{l=1}^{2k}X_l}\le \frac{2k(k+1)}{2k+1}.
    \end{align*}
\end{proof}

We can take $G$ to be the complete bipartite graph $K_{k+1,\, k}$, and then we have $mad(G)=\frac{2k(k+1)}{2k+1}$, so the equality can be attained. In particular, if $W(G)=7$, then $k=3$ and the largest possible value of $mad(G)$ is $\frac{24}{7}$. So, our bound $mad(G)<\frac{40}{13}$ is $\frac{32}{91}$ away from the best possible bound. 

\section{Conclusion}

In this paper, with a discharging method, we have shown that if $G$ is a graph with $W(G)\le 7$ and $mad(G)<\frac{40}{13}$, then $\chi_s'(G)\le 13$. The first future goal is to improve the bound on $mad(G)$.
\begin{problem}
    Improve the bound $mad(G)<\frac{40}{13}$.
\end{problem}

Also, it is natural to consider the next case $W(G)=8$, following the established pattern.
\begin{problem}
    Find an $M$ such that if $G$ is a graph with $W(G)\le 8$ and $mad(G)<M$, then $\chi_s'(G)\le 20$.
\end{problem}


\section*{Data availability}
There are no data associated with this paper.

\section*{Declarations}
\textbf{Competing Interests:} The author declares that he has no competing interests.

\end{document}